\newcommand{\RR}{\mathbb{R}}
\newcommand{\bx}{\bm{x}}
\definecolor{junglegreen}{rgb}{0.16, 0.67, 0.53}
\theoremstyle{plain}
\newtheorem{theorem}{Theorem}[section]
\newtheorem{lemma}[theorem]{Lemma}
\theoremstyle{definition}
\theoremstyle{remark}
\title{Simplifying deflation for non-convex optimization with applications in Bayesian inference and topology optimization}
\author{%
  Mohamed~Tarek \\
  Pumas-AI Inc., U.S. \\
  The University of Sydney Business School, Australia \\
  \texttt{mohamed82008@gmail.com}
  \And
  Yijiang~Huang \\
  Department of Architecture, Massachusetts Institute of Technology, U.S. \\
  \texttt{yijiangh@mit.edu}
}
\begin{document}
\maketitle

\begin{abstract}
Non-convex optimization problems have multiple local optimal solutions.
Non-convex optimization problems are commonly found in numerous applications.
One of the methods recently proposed to efficiently explore multiple local optimal solutions without random re-initialization relies on the concept of deflation.
In this paper, different ways to use deflation in non-convex optimization and nonlinear system solving are discussed.
A simple, general and novel deflation constraint is proposed to enable the use of deflation together with existing nonlinear programming solvers or nonlinear system solvers.
The connection between the proposed deflation constraint and a minimum distance constraint is presented.
Additionally, a number of variations of deflation constraints and their limitations are discussed.
Finally, a number of applications of the proposed methodology in the fields of approximate Bayesian inference and topology optimization are presented.
\end{abstract}

\section{Introduction}

Non-convex optimization problems are used in numerous applications including: machine learning, mechanical design, economics, and the design of clinical trials, among many other applications.
One of the fundamental challenges of non-convex optimization is the existence of multiple local minimizers. 
Local first and second order optimization algorithms often get stuck in a particular local minimizer without any guarantees that this is the best solution that can be found. 
The ability to explore multiple local minimizers is often important in practice to find better solutions or to provide more diverse choices to decision makers if all the choices are nearly equally good or the optimization objectives cannot capture users' preferences completely.
For example, multiple car body designs can be reported and the decision-makers can choose one based on the subjective aesthetic appeal.

Despite the popularity of using standard non-convex optimization techniques with various random or heuristic restart strategies to find multiple optima \citep{rinnooy_kan_stochastic_1987,kaelo_variants_2006,arnoud_benchmarking_2019}, these methods can be computationally inefficient because they don't protect against converging to the same solution from different starting points.
In contrast, deflation-based methods \citep{papadopoulos_computing_2021} have the benefit of provably converging to distinct solutions under certain assumptions, even when starting from {\it the same initial guess}.
However, originally developed for solving nonlinear equation system \citep{brow1971deflation,farrell_computation_2016}, adapting deflation to non-convex optimization problems requires significant adaptation of the non-convex optimization algorithm \citep{papadopoulos_computing_2021}, which prevents it from being used with a broader class of problems, algorithms and applications.

In this paper, we propose a simple, yet effective and provably correct way of making use of deflation by reformulating the non-convex optimization problem, instead of adapting the backend optimization algorithm.
We then prove the equivalence of this approach to a minimum distance constraint under certain assumptions.
The proposed approach has the benefit of (1) being simpler to implement since it's a formulation change rather than an algorithmic change; (2) being more flexible allowing the use of arbitrary optimization algorithms that are suitable for the problem class at hand.
We show a number of examples from different applications, each using deflation together with the most suitable and/or popular optimization algorithm for the respective application, which is not achievable by previous deflation-based approaches.

One limitation of the proposed approach is that it requires the handling of a non-convex, inequality constraint even if the original problem was unconstrained. However, one way to workaround this limitation is demonstrated in the examples section.

\section{Related work}

\textbf{Systematic multi-start.}
Systematically restarting the optimization multiple times from random or deterministically diverse \citep{latin_hypercube,kucherenko_application_2005} initial solutions is a common strategy to find multiple local minimizers in non-convex optimization. Some popular algorithms following this approach are the multi-level single-linkage (MLSL) algorithm \citep{rinnooy_kan_stochastic_1987}, the controlled random search (CRS) \citep{kaelo_variants_2006} algorithm, and the TikTak algorithm \citep{arnoud_benchmarking_2019}. However, all the approaches above can be computationally expensive and wasteful since restarting the optimization algorithm from a different initial solution may not give a different local minimizer.

\textbf{Divide and conquer} Other similar algorithms rely on sub-dividing the search space into smaller hyper-rectangles to narrow down the search space for each sub-problem. Algorithms in this category include the StoGo algorithm \citep{stogo1,stogo2} and the DIviding RECTangles (DIRECT) algorithm \citep{jones_lipschitzian_1993,gablonsky_locally-biased}. However, these approaches don't scale well with the number of decision variables $m$ since the number of fixed size hyper-rectangles one can divide an $m$ dimensional solution space into grows exponentially with $m$.

\textbf{Hyperparameter and bilevel optimization.} Alternatively, techniques from hyper-parameter optimization \citep{li2018hyperband,falkner2018bohb} or bilevel optimization \citep{bilevel2018} can be used to optimize the starting point of the lower level optimization algorithm.
One can compose a global search algorithm such as an evolutionary algorithm \citep{GendPotv10} and a local search algorithm together to create an algorithm that can explore different initial solutions and find best local minimizers in different neighbourhoods.
This family of global-local algorithms is also sometimes termed memetic algorithms \citep{Moscato2010}.

\textbf{Global metaheuristics.} Beside their use in memetic algorithms, global metaheuristic optimization algorithms \citep{GendPotv10} can also be used as standalone algorithms but these algorithms don't tend to scale well to large problems since they usually don't exploit the often available gradients and sometimes Hessians of objective and constraint functions.

\textbf{Tunneling-based multi-start.} Tunneling \citep{tunneling1,tunneling2,tunneling3,tunneling4,zhang_finding_2018} is another heuristic technique often used to find multiple local minimizers by finding a sufficiently different starting point with a similar or better objective value as the best solution found so far. 
The optimization problem is then re-solved starting from this new point hoping to converge to a different solution.
However, this is a 2 step approach consisting of first solving the optimization problem and then finding a new, sufficiently different initial point. And the success to find distinct minimizers depends on the success to find a sufficiently different starting point.

\textbf{Exact global optimization.} 
Beside the use of restarts, there are also some well known exact global optimization algorithms for some classes of non-convex optimization \citep{tawarmalani_polyhedral_2005,sahinidis:baron:21.1.13,BeLeLiMaWa08,SCIP,EAGO,alpine_JOGO2019,alpine_CP2016,ratschek,gecode}. 
These approaches however tend to require the explicit analytical mathematical expressions of the objective and constraint functions so they are not suitable for black-box non-convex optimization, and they typically don't scale well to large problems in practice, where the exactness guarantee has an exponential computational time complexity in the number of variables.



\textbf{Deflation-based multi-start.}
Deflation is a recently proposed technique that employs Newton-like methods to find multiple solutions of a nonlinear systems of equations~\citep{brow1971deflation, farrell_computation_2016, farrell2020deflation}.
Assuming the underlying Newton-like algorithm converges, under certain assumptions, a deflation-based solver is guaranteed to converge to a unique solution each time the solver is started from the same initial solution. 
This has the promise of being much less wasteful than multi-start optimization approaches. 
However, one problem with deflation is that often the assumptions under which convergence to a different solution is guaranteed are not easy to verify in practice. 
That said, the method was shown to perform well in practical applications.

Deflation was also used in a primal-dual interior point optimization algorithm \citep{primaldual} to find multiple locally optimal designs in mechanical design optimization problems \citep{papadopoulos_computing_2021}.
However, this deflated optimizer had to re-invent a primal-dual interior point optimization algorithm \citep{primaldual} since off-the-shelf solvers such as the Interior Point OPTimizer (IPOPT) \citep{wachter2006implementation} could not be used directly for reasons to be presented in this work. 
Not being able to use existing optimization solvers is a huge limitation of this approach since different optimization algorithms tend to be more suitable for different problems and applications.
%
%
\section{Background}

\subsection{Sufficient optimality conditions for regular points} \label{sec:opt_cond}

We aim to find multiple solutions for the following nonlinear program (NLP):
\begin{mini}|l|[3]
  {\bm{x}\in\RR^n}{f(\bm{x})}{}{}
  \addConstraint {\bm{c}(\bm{x})}{= \bm{0}}{\label{form:eq}}
  \addConstraint \bm{l} \leq {\bm{x} \leq \bm{u}}{}{}
\end{mini}
where $\bm{l} \in (-\infty,\infty)^n, \bm{u} \in (-\infty,\infty)^n$ are the finite (for simplicity) lower and upper bounds of the variable $\bx$.
The objective function $f: \RR^n \xrightarrow{} \RR$ and the equality constraints $c: \RR^n \xrightarrow{} \RR^m$, with $m \leq n$ are assumed to be twice continuously differentiable.
Problems with general nonlinear inequality constraints $\bm{d}(\bx) \leq \bm{0}$ can be converted to equality constraints by adding slack variables.

Let:
\begin{align}
  \mathcal{L}(\bm{x}, \bm{\lambda}, \bm{z}_+, \bm{z}_-) = f(\bm{x}) + \bm{c}(\bm{x})^T \bm{\lambda} + \nonumber\\
  (\bm{x} - \bm{u})^T \bm{z}_+  - (\bm{x} - \bm{l})^T \bm{z}_-
\end{align}

where $\bm{\lambda} \in \mathbb{R}^m$ is the Lagrangian multiplier vector of the equality constraints, $\bm{z}_- \in \mathbb{R}^n_+, \bm{z}_+ \in \mathbb{R}^n_+$ are the Lagrangian multiplier vectors of the bound constraints. 

If $\bm{x}$ is regular and is a local minimizer of the NLP, then $\exists (\bm{\lambda}, \bm{z}_+, \bm{z}_-)$ such that:
\begin{subequations}
\begin{align}
  \nabla_{\bm{x}} \mathcal{L}(\bm{x}, \bm{\lambda}, \bm{z}_+, \bm{z}_-) = \bm{0} \label{eqn:stationarity} \\
  \bm{c}(\bm{x}) = \bm{0}  \label{eqn:primal_feasible_1} \\
  \bm{l} \leq \bm{x} \leq \bm{u}  \label{eqn:primal_feasible_2}  \\
  \bm{z}_+ \geq \bm{0}  \label{eqn:dual_feasible_1}  \\
  \bm{z}_- \geq \bm{0} \label{eqn:dual_feasible_2} \\
  (\bm{x} - \bm{u})^T \bm{z}_+ = 0 \label{eqn:complementarity_1} \\
  (\bm{x} - \bm{l})^T \bm{z}_- = 0 \label{eqn:complementarity_2}
\end{align}
\label{eq:nlp_kkt}
\end{subequations}
and
\begin{align}
  \nabla \bm{c}(\bm{x})^T \nabla_{\bm{x}\bm{x}}^2 \mathcal{L}(\bm{x}, \bm{\lambda}, \bm{z}_+, \bm{z}_-) \nabla \bm{c}(\bm{x}) \succcurlyeq \bm{0} \label{eq:second_order}
\end{align}

\cref{eq:nlp_kkt,eq:second_order} are known as the first and second order Karush-Kuhn-Tucker (KKT) sufficient conditions for optimality respectively. A point $\bm{x}$ that satisfies \cref{eq:nlp_kkt} is typically called a KKT point \citep{nocedal2006numerical}.
Finding multiple solutions of the NLP can be reduced to finding solutions that satisfy \cref{eq:nlp_kkt}, while indirectly enforcing \cref{eq:second_order} via following descent directions.

\subsection{Deflation} \label{sec:deflation}

Deflation is a technique that systematically modifies a nonlinear problem to guarantee that Newton’s method will not converge to a known root, thus enabling unknown roots to be discovered from the same initial guess~\citep{farrell_computation_2016}.

\paragraph{Deflation for nonlinear equation solving}
\citet{farrell_computation_2016} proved that under certain conditions, solving a system of $n$ nonlinear equations:
\begin{align*}
    \bm{F}(\bm{x}) = \bm{0}
\end{align*}
starting from the same initial solution $\bm{x}_0$ can converge to multiple locally optimal solutions by applying a deflation operator whenever a solution is found. Let the first solution found be $\bm{x}_1$. The deflation operator can then be defined as:
\begin{subequations}
\begin{align*}
    \bm{M}(\bm{x}; \bm{x}_1) = m(\bm{x}; \bm{x}_1) \mathcal{I} \\
    m(\bm{x}; \bm{x}_1) = ||\bm{x} - \bm{x}_1||^{-p} + \sigma
\end{align*}
\end{subequations}
for some power $p$ and shift $\sigma$, where $\mathcal{I}$ is the $n \times n$ identity matrix. The deflated nonlinear system of equations is given by:
\begin{align*}
    \bm{M}(\bm{x}; \bm{x}_1) \bm{F}(\bm{x}) = \bm{0}
\end{align*}
After solving the system once, one can solve the deflated system and obtain a new solution $\bm{x}_2$. One of the assumptions required to prove the convergence of the proposed algorithm to a different solution $\bm{x}_2$ is:
\begin{align} \label{eqn:assumption}
    \lim_{\bm{x} \to \bm{x}_1} ||\bm{M}(\bm{x}; \bm{x}_1) \bm{F}(\bm{x})|| > 0
\end{align}
To deflate away from multiple found solutions, the original method proposes multiplying the operators and solve $\prod_{k=1}^{\tilde{K}} \bm{M}(\bx; \bx_k) \, \bm{F}(\bx) = 0$, where $\tilde{K}$ is the number of found solutions.
To improve numerical stability, summation instead of multiplication can also be used.

\paragraph{Deflation for NLP optimization}

To apply deflation to finding multiple solutions of NLP, one can write most of \cref{eq:nlp_kkt} as $\bm{F}(\bm{x}, \bm{\lambda}) = 0$ (see the appendix for more details) and solve for different solutions of the deflated system:
$$\bm{M}(\bx; \bx_1) \bm{F}(\bm{x}, \bm{\lambda}) = 0$$
while ensuring a descent direction is taken at every step.
However, solving this nonlinear system of equations using Newton-like algorithms requires evaluating the Jacobian of the residual of the deflated nonlinear system of equations.
One of the most popular existing implementations of the primal-dual interior point algorithm in IPOPT assumes the Jacobian of the top block of $\bm{F}$ to be symmetric (since it's the Hessian of the Lagrangian).
This makes it difficult to reuse IPOPT with deflation directly.
Arguably, this is also the main motivation for \citet{papadopoulos_computing_2021} developing a specialized primal-dual interior point algorithm from scratch for use with deflation.
For more details on the derivations relevant to this discussion see \cref{sec:appendix_kkt_logbarrier}.




\section{Method} \label{sec:generalizations}

In this section, the use of deflation and the choice of the optimization algorithm to use will be decoupled by making use of a deflation constraint that only requires a change in the formulation of the optimization problem without specifying the algorithm.

\subsection{Formulating deflation as a constraint}

Instead of deflating the optimality conditions, one can create a new constraint and variable that enforce the same deflation effect. Let $y \geq 0$ be a new variable. One possible constraint to add is:
\begin{align}
    m(\bm{x}; \bm{x}_1) = ||\bm{x} - \bm{x}_1||^{-p} + \sigma \leq y
\end{align}
If after adding this constraint, the optimal solution $\bm{x}_2$ obtained has a finite $y$ in exact arithmetic, then $\bm{x}_1 \neq \bm{x}_2$ since $\lim_{\bm{x} \to \bm{x}_1} m(\bm{x}; \bm{x}_1) = \infty$. One can further show that every KKT point of the new NLP with a finite $y$ is a KKT point of the original formulation.

\begin{lemma} \label{lemma:1}
If $(\bm{x}^*, y^*)$ is a regular KKT point to the following NLP:
\begin{mini}|l|[3]
  {\bm{x}, y}{f(\bm{x})}{}{\label{form:deflated}}
  \addConstraint {\bm{c}(\bm{x})}{= \bm{0}}{}
  \addConstraint {\bm{m}(\bm{x}; \bm{x}_1)}{\leq y}{}
  \addConstraint \bm{l} \leq {\bm{x} \leq \bm{u}}{}{}
\end{mini}
    for a finite $y^*$, and $\bm{m}$ is bounded from below, then $\bm{x}^*$ is a regular KKT point to problem \ref{form:eq} and $\bm{x}^* \neq \bm{x}_1$.
\end{lemma}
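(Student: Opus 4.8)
The plan is to show that the Lagrange multiplier of the deflation constraint must vanish at any KKT point of problem~\ref{form:deflated} with finite $y^*$; once this is established the KKT system of problem~\ref{form:deflated} collapses onto that of problem~\ref{form:eq}, and regularity transfers from the larger problem to the smaller one.

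First I would write the first-order conditions of problem~\ref{form:deflated}. Attach a multiplier $\mu \ge 0$ to the constraint $m(\bm{x};\bm{x}_1) - y \le 0$ and reuse $\bm{\lambda}, \bm{z}_+, \bm{z}_-$ for the constraints shared with problem~\ref{form:eq}, so that the relevant Lagrangian is $\mathcal{L}(\bm{x},\bm{\lambda},\bm{z}_+,\bm{z}_-) + \mu\,(m(\bm{x};\bm{x}_1) - y)$. The key point is that the new variable $y$ enters problem~\ref{form:deflated} only through this single constraint and not through the objective; hence stationarity with respect to $y$ reads $-\mu^* = 0$ (or $-\mu^* - z_y^* = 0$ with $\mu^*, z_y^* \ge 0$ if one additionally keeps the bound $y \ge 0$), which forces $\mu^* = 0$. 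Plugging $\mu^* = 0$ into the stationarity condition with respect to $\bm{x}$ deletes the term $\mu^*\nabla_{\bm{x}} m(\bm{x}^*;\bm{x}_1)$ and leaves exactly $\nabla_{\bm{x}}\mathcal{L}(\bm{x}^*,\bm{\lambda}^*,\bm{z}_+^*,\bm{z}_-^*) = \bm{0}$, i.e.\ \eqref{eqn:stationarity}. Conditions \eqref{eqn:primal_feasible_1}--\eqref{eqn:complementarity_2} then hold automatically, since $\bm{c}(\bm{x}) = \bm{0}$, the box constraints $\bm{l} \le \bm{x} \le \bm{u}$, their dual feasibility and their complementarity slackness appear verbatim in both problems. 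Hence $\bm{x}^*$ with multipliers $(\bm{\lambda}^*,\bm{z}_+^*,\bm{z}_-^*)$ satisfies \cref{eq:nlp_kkt}, so $\bm{x}^*$ is a KKT point of problem~\ref{form:eq}.

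Next I would verify that $\bm{x}^*$ is \emph{regular} for problem~\ref{form:eq}. The constraints of problem~\ref{form:deflated} active at $(\bm{x}^*,y^*)$ are the equality constraints, the active bound constraints, and possibly $m(\bm{x};\bm{x}_1) \le y$; written in $(\bm{x},y)$ coordinates the gradients of the first two families have vanishing $y$-component, whereas the gradient of the deflation constraint is $(\nabla_{\bm{x}} m(\bm{x}^*;\bm{x}_1),\,-1)$ with $y$-component $-1$. Therefore in any linear combination of active gradients of problem~\ref{form:deflated} that sums to zero the coefficient of the deflation constraint must be zero, and the leftover is exactly a linear combination of $\nabla c_i(\bm{x}^*)$ and the active bound directions --- the active constraint gradients of problem~\ref{form:eq}. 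So regularity (LICQ) at $(\bm{x}^*,y^*)$ for problem~\ref{form:deflated} implies regularity at $\bm{x}^*$ for problem~\ref{form:eq}, and in particular the multipliers above are well defined.

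Finally I would establish $\bm{x}^* \ne \bm{x}_1$: feasibility of $(\bm{x}^*,y^*)$ gives $m(\bm{x}^*;\bm{x}_1) \le y^* < \infty$, whereas $m(\bm{x};\bm{x}_1) = \|\bm{x} - \bm{x}_1\|^{-p} + \sigma \to \infty$ as $\bm{x} \to \bm{x}_1$ (and is undefined at $\bm{x}_1$), so $\bm{x}^* = \bm{x}_1$ is impossible; the hypothesis that $\bm{m}$ is bounded below is what guarantees $m(\bm{x}^*;\bm{x}_1)$ is a genuine finite value at the admissible $\bm{x}^*$, making the inequality $m(\bm{x}^*;\bm{x}_1) \le y^*$ a meaningful finite comparison. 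The main obstacle I anticipate is the careful bookkeeping in the first two steps: making the $y$-stationarity argument airtight for the exact reading of ``new variable $y$'' (free versus nonnegative) and of ``regular KKT point'' used in the paper, and, if $\bm{m}$ is genuinely vector-valued with a componentwise constraint, repeating the $y$-stationarity argument per component so that each component multiplier vanishes.
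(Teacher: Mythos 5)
Your proposal is correct and follows essentially the same route as the paper's proof: stationarity with respect to $y$ forces the multiplier of the deflation constraint to vanish, after which the KKT system of problem~\ref{form:deflated} reduces to \cref{eq:nlp_kkt} for problem~\ref{form:eq}, and feasibility with finite $y^*$ rules out $\bm{x}^* = \bm{x}_1$. Your explicit argument that LICQ at $(\bm{x}^*, y^*)$ transfers to $\bm{x}^*$ (via the $-1$ in the $y$-component of the deflation-constraint gradient) is a detail the paper leaves implicit, so it is a welcome addition rather than a deviation.
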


\begin{proof}
Let the Lagrangian multipliers of the equality constraints be $\bm{\lambda}$, $\bm{z}_-$ be those of the $\geq$ bound constraints, and $\bm{z}_+$ be those of the $\leq$ bound constraints. Let the additional Lagrangian multiplier of the deflation constraint be $\eta$. The stationary conditions of problem \ref{form:deflated} are:
\begin{subequations}
\begin{align*}
  \nabla_{\bm{x}} f(\bm{x}) + \nabla_{\bm{x}} \bm{c}(\bm{x})^T \bm{\lambda} + \bm{z}_+ + \bm{z}_- + \eta \nabla_{\bm{x}} m(\bm{x}; \bm{x}_1) = \bm{0} \\
  \eta = 0
\end{align*}
\end{subequations}

Since $\eta$ will be 0 at any KKT point, the stationarity conditions of problem \ref{form:eq} will be satisfied.
\begin{align}
    \nabla_{\bm{x}} f(\bm{x}) + \nabla_{\bm{x}} \bm{c}(\bm{x})^T \bm{\lambda} + \bm{z}_+ - \bm{z}_- = \bm{0}
\end{align}
Additionally, the complementarity condition of the deflation constraint and the dual feasibility constraint $\eta \geq 0$ are trivially satisfied at $\eta = 0$. Since the constraints of problem \ref{form:eq} are a subset of the constraints of problem \ref{form:deflated}, $\bm{x}^*$ must be feasible to the original problem and the complementarity conditions of those constraints must be satisfied.

Given that $y^*$ is finite, $(\bm{x}^*, y^*)$ is feasible to the deflation constraint in problem \ref{form:deflated} and $m$ is bounded from below, then $m(\bm{x}^*; \bm{x}_1)$ must be finite which implies that $\bm{x}^* \neq \bm{x}_1$. This completes the proof.
\end{proof}

Note that the proof above can be trivially generalized to inequality constrained and even conic constrained nonlinear programs. Therefore, this deflation constraint approach is a completely generic and non-invasive way to use deflation in optimization.

Sine the deflation constraint approach only requires a change in the formulation optimized rather than the optimization routine, any KKT seeking nonlinear programming algorithm, e.g. the method of moving asymptotes \citep{Svanberg1987,Svanberg2002} or the augmented Lagrangian algorithm \citep{Bertsekas1996} can be used to solve the deflated formulation. This is a much more generic and simpler way to use deflation than deflating all of the optimality conditions.

\subsection{Alternative deflation constraint}

One can also avoid the introduction of an additional variable $y$ replacing it with a large finite constant $M$.

\begin{lemma}
If $(\bm{x}^*)$ is a regular KKT point to the following NLP:
\begin{mini}|l|[3]
  {\bm{x}}{f(\bm{x})}{}{\label{form:deflated2}}
  \addConstraint {\bm{c}(\bm{x})}{= \bm{0}}{}
  \addConstraint {\bm{m}(\bm{x}; \bm{x}_1)}{\leq M}{}
  \addConstraint \bm{l} \leq {\bm{x} \leq \bm{u}}{}{}
\end{mini}
for some finite constant $M$, the constraint $\bm{m}(\bm{x}; \bm{x}_1) \leq M$ is satisfied at a strict inequality, and $\bm{m}$ is bounded from below, then $\bm{x}^*$ is a regular KKT point to problem \ref{form:eq} and $\bm{x}^* \neq \bm{x}_1$.
\end{lemma}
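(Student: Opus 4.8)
The plan is to mirror the proof of \cref{lemma:1}, with the role played there by stationarity in the auxiliary variable $y$ now played by complementary slackness on the deflation constraint. First I would write down the KKT system of problem \ref{form:deflated2}: let $\bm{\lambda}$, $\bm{z}_-$, $\bm{z}_+$ and $\eta \geq 0$ be the multipliers of the equality constraints, the lower-bound constraints, the upper-bound constraints and the deflation constraint $m(\bm{x};\bm{x}_1) \leq M$ respectively. Stationarity reads $\nabla_{\bm{x}} f(\bm{x}^*) + \nabla_{\bm{x}} \bm{c}(\bm{x}^*)^T \bm{\lambda} + \bm{z}_+ - \bm{z}_- + \eta \nabla_{\bm{x}} m(\bm{x}^*;\bm{x}_1) = \bm{0}$, together with primal feasibility, dual feasibility $\eta \geq 0$, and the complementarity condition $\eta\,(m(\bm{x}^*;\bm{x}_1) - M) = 0$.

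Second, I would invoke the hypothesis that the deflation constraint holds at a strict inequality, i.e. $m(\bm{x}^*;\bm{x}_1) - M < 0$. Complementarity then forces $\eta = 0$, exactly as $\eta = 0$ was obtained in \cref{lemma:1} (there from the stationarity equation in $y$, here from slackness). Substituting $\eta = 0$ into the stationarity equation collapses it to the stationarity condition of problem \ref{form:eq}, and the remaining KKT conditions of problem \ref{form:eq} are a sub-collection of those of problem \ref{form:deflated2} — the equality and bound constraints are shared, so their feasibility and complementarity conditions carry over verbatim — so $(\bm{x}^*, \bm{\lambda}, \bm{z}_+, \bm{z}_-)$ is a KKT point of problem \ref{form:eq}.

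Third, for regularity I would note that since the deflation constraint is inactive at $\bm{x}^*$, the set of constraints of problem \ref{form:eq} active at $\bm{x}^*$ coincides with the set of constraints of problem \ref{form:deflated2} active at $\bm{x}^*$; linear independence of the active constraint gradients for the latter therefore immediately yields it for the former, i.e.\ $\bm{x}^*$ is regular for problem \ref{form:eq}. Finally, $\bm{x}^* \neq \bm{x}_1$ follows as in \cref{lemma:1}: since $\bm{m}$ is bounded from below and $m(\bm{x}^*;\bm{x}_1) \leq M < \infty$, the value $m(\bm{x}^*;\bm{x}_1)$ is finite, whereas $\lim_{\bm{x}\to\bm{x}_1} m(\bm{x};\bm{x}_1) = \infty$, so $\bm{x}^*$ cannot equal $\bm{x}_1$.

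The only real subtlety is the second step — securing $\eta = 0$. In \cref{lemma:1} this came for free because $y$ is unconstrained from above, so its stationarity equation is literally $\eta = 0$; here we genuinely need the extra strict-feasibility hypothesis, and I would flag in the write-up why it cannot be dropped: if $m(\bm{x}^*;\bm{x}_1) = M$ exactly, then $\eta$ may be strictly positive and the perturbed stationarity condition need not reduce to that of problem \ref{form:eq}. Everything else is routine bookkeeping inherited from the proof of \cref{lemma:1}.
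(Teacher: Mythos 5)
Your proposal is correct and follows essentially the same route as the paper: complementary slackness together with the strict-inequality hypothesis forces $\eta = 0$, after which the stationarity, feasibility and complementarity conditions of problem \ref{form:eq} follow exactly as in \cref{lemma:1}, and finiteness of $m(\bm{x}^*;\bm{x}_1)$ rules out $\bm{x}^* = \bm{x}_1$. Your added remarks on regularity of the active-constraint set and on why strict feasibility cannot be dropped are fleshing out details the paper compresses into ``the rest of the proof is identical,'' not a different argument.
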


\begin{proof}
Let the Lagrangian multipliers of the equality constraints be $\bm{\lambda}$, $\bm{z}_-$ be those of the $\geq$ bound constraints, and $\bm{z}_+$ be those of the $\leq$ bound constraints. Let the additional Lagrangian multiplier of the deflation constraint be $\eta$. The stationary conditions of problem \ref{form:deflated2} are:
\begin{subequations}
\begin{align*}
  \nabla_{\bm{x}} f(\bm{x}) + \nabla_{\bm{x}} \bm{c}(\bm{x})^T \bm{\lambda} + \bm{z}_+ - \bm{z}_- + \eta \nabla_{\bm{x}} m(\bm{x}; \bm{x}_1) = \bm{0}
\end{align*}
\end{subequations}

By the complimentarity slackness conditions, since the deflation constraint is satisfied at a strict inequality, $\eta$ must be equal to $0$ at any KKT point. The rest of the proof is identical to the proof of Lemma \ref{lemma:1}
\end{proof}

\subsection{Simple deflation for nonlinear systems} \label{sec:nonlinear_systems}

Much like in optimization, the following deflation equality constraint can be added to a nonlinear system of equations:
\begin{align}
    m(\bm{x}; \bm{x}_1) = y
\end{align}
solving for both $\bm{x}$ and $y$. It is trivial to see that if the algorithm converges to a solution $(\bm{x}^*, y^*)$ with a finite $y^*$, then $\bm{x}^*$ will satisfy the original set of equations and $\bm{x}^* \neq \bm{x}_1$.

\subsection{Deflating multiple intermediate solutions} \label{sec:multiple_points}

In \citet{papadopoulos_computing_2021}, the deflation operator was used to deflate away from the barrier sub-problems' intermediate solutions. 
This is a nice way to automatically adapt the deflation power by ensuring that we deflate away from the entire critical path of the interior point optimizer next time the system is solved.
It can also be used to provide accelerated convergence by encouraging solutions to intermediate barrier sub-problems to be more different from the previous barrier sub-problems. 
The same approach can be used in any nonlinear programming algorithm by using a manual callback and adding new "known solutions" to the deflation operator every fixed number of iterations. 

\subsection{Deflation constraint is a minimum distance constraint} \label{sec:distance_constraint}

The deflation constraint is equivalent to the following distance constraint assuming finite non-zero $m > 0$ and $y > \sigma$:
\begin{align}
    ||\bm{x} - \bm{x}_1||^{p} \geq z
\end{align}
where $z = \frac{1}{y - \sigma}$. In this case, it is clear what the deflation algorithm does. It just puts a constraint on the distance to known solutions. If the NLP optimizer approaches $z = 0$ or $y = \infty$, then the deflation operator may not be swaying the solution enough away from known solutions.
When using the fixed, large $M$ formulation from equation \ref{form:deflated2}, instead of $y$ tending to infinity, the deflation constraint will be satisfied at equality when deflation fails to make the algorithm converge to a different solution.
The main hyperparameters that can be tuned in the deflation constraint are the distance measure used and $p$ to ensure convergence to a different solution.

\subsection{Different distance measures} \label{sec:distance_measures}

One can change the distance measure used in the deflation function to more interesting choices than a simple power of a 2-norm. The following are different ways to change the distance measure to achieve different desired effect:
\begin{enumerate}
    \item Use a $\ell_q$ norm distance measure instead of a power $p$ of a $\ell_2$ norm, or use a power $p$ of a $\ell_q$ norm distance measure.
    \item Use a positive semi-definite weight matrix $\bm{Q}$ and define the distance as the generalized $\ell_2$ norm distance: $(\bm{x} - \bm{x}_1)^T \bm{Q} (\bm{x} - \bm{x}_1)$. If $\bm{Q}$ is diagonal, it can be used to give different weights to different deviation terms.
    \item The distance measure can detect symmetries in the solution space if 2 solutions are numerically different but practically identical. In topology optimization for example, it may make more sense to define the distance measure in the pseudo-densities $\bm{\rho}$ space after applying density filtering, interpolation and Heaviside projection to the solution $\bm{x}$, rather than calculating distance in the $\bm{x}$ space. This more accurately reflects the desire of the optimizer to obtain different designs rather than different, symmetric representations of the same design.
    \item In topology optimization also, the number of design variables can be extremely large with many similar looking designs that have slightly different numerical values. A particular modification to the distance measure can therefore be useful to workaround this curse of dimensionality problem. Let $\sigma = 0$ and consider the following deflation function:
    \begin{align}
        m(\bm{x}, \bm{x}_1) = \max(||\bm{x} - \bm{x}_1|| - r, 0)^{-p}
    \end{align}
    This deflation function treats all solutions in the hyper-ball of radius $r$ around $\bm{x}_1$ as identical to $\bm{x}_1$, i.e. the distance between them is 0. This function is smooth and twice differentiable for all $\bm{x} \in \{\bm{x}: ||\bm{x} - \bm{x}_1|| > r \}$. A KKT solution with a finite $y$ (in exact arithmetic) can be similarly shown to be equivalent to finding a new optimal solution outside the hyper-ball. This new parameter $r$ can be useful for high dimensional problems in topology optimization where 2 designs can be numerically different but visually and practically identical. Another side advantage of using $r > 0$ is that even if deflation fails to push the optimization algorithm away from $\bm{x}_1$, the new solution obtained is guaranteed to be outside the hyper-ball.
    \item In variational inference, the Kullback-Leibler (K-L) divergence can be used to deflate away from known distributions that locally optimize the K-L divergence to the posterior distribution.
    \item In deep learning, a distance measure between the neuron values can be used instead of the weights and biases to account for symmetries in the weights.
\end{enumerate}

\subsection{Potential problems with deflation} \label{sec:deflation_problems}

While deflation has proven rather successful in a number of root finding and optimization applications, it is not free of limitations which need to be addressed or acknowledged when implementing or using the algorithm. The following are some common problems associated with the proposed deflation constraint most of which are also concerns when using the traditional deflation method.
\begin{enumerate}
    \item The deflation effect may not be strong enough where the algorithm can still approach $\bm{x}_1$, asymptotically increasing $y$ to $\infty$. This is similar to what can happen in the classic deflation when assumption \ref{eqn:assumption} is violated.
    \item The optimization is not done in exact arithmetic but rather up to machine precision. This can make the algorithm converge to a solution with a finite but large value value of $y^*$ instead of overflowing to $\infty$. Therefore, the finiteness of $y^*$ may not be a good enough indication that the algorithm has converged to a solution other than $\bm{x}_1$.
    \item Deflation guarantees that if convergence happens, the solution will be different. However, it doesn't guarantee that convergence will happen to begin with. Even more so, deflating away from 1 solution is likely to push the optimizer away from other nearby locally optimal solutions if they exist. This can make it particularly challenging to fine-tune the hyperparameters of the algorithm. However arguably this can also be a desirable effect of deflation since it means that more diverse solutions are naturally more likely to be output by the algorithm.
    \item Careful selection of the power $p$ and other hyperparameters used in the chosen distance measure is required.
\end{enumerate}

\section{Examples}

In this section, a number of applications of deflation and non-convex optimization are showcased from machine learning and topology optimization. The main highlight of this section is that known popular algorithms were used or minimally modified to solve the deflation sub-problems.
In all of the examples, the same initial solution was used in the deflation sub-problems to showcase the efficacy of the approach proposed. 
Implementations, including detailed hyperparameter settings, can be found in the supplementary code\footnote{\url{https://github.com/JuliaTopOpt/deflation_examples/}}.

\subsection{Classical variational inference}\label{sec:examples_var_inf}

In this section, an example of the use of deflation in variational inference will be demonstrated. 
For simplicity, the log joint probability in variational inference is replaced by the log probability of a 1D mixture of $10$ Gaussian distributions. 
The variational family used is a 1D Gaussian. 
The automatic differentiation variational inference (ADVI) \citep{advi2015} algorithm was used together with the decayed ADAGrad \citep{adagrad2011} stochastic optimization algorithm from the AdvancedVI.jl \footnote{\url{https://github.com/TuringLang/AdvancedVI.jl}} package 
to minimize a stochastic estimator of the negative of the evidence lower bound (ELBO). 
Since there is no data, the negative of the ELBO is equal to the K-L divergence. Distributions.jl \footnote{\url{https://github.com/JuliaStats/Distributions.jl}} was used to define the mixture of Gaussians, the variational family, and the estimator of the K-L divergence objective function. 
The deflated variational approximation problem can be formulated using the loss function $L(\bm{\theta})$ as follows:
\begin{mini}
  {\bm{\theta}, y}{L(\bm{\theta})}{}{\label{form:vi}}
  \addConstraint \sum_{k=1}^{\tilde{K}} {\bm{m}(\bm{\theta}; \bm{\theta}_k)}{\leq y}{}
\end{mini}
where $\bm{\theta}$ is the vector of parameters of the variational family and $\bm{\theta}_k$ is the previously found local minimizer of the loss function from subproblem $k$, and $\tilde{K}$ is the total number of found solutions so far.
$\bm{m}$ and $y$ are the deflation function and variable respectively as described in the previous sections.

Since the objective is a stochastic estimator of the K-L divergence and the constraint is non-convex, a log-barrier approach \citep{logbarrier} is used to transform the stochastic constrained problem to an unconstrained one. The log-barrier formulation becomes:
\begin{mini}|l|[3]
  {\bm{\theta}, y}{L(\bm{\theta}) - r \times \log \bigg{(y - \sum_{k=1}^{\tilde{K}} \bm{m}(\bm{\theta}; \bm{\theta}_k)\bigg)}}{}{\label{form:vi-log}}
\end{mini}
where $r$ is defined as a decaying coefficient approaching 0. The above problem was solved 10 times from the same initial solution $\bm{\theta}_0$ each time converging to different Gaussian approximations and appending it to the list of found solutions. The distance function used between solutions in the deflation function was the analytic K-L divergence between the 2 Gaussian distributions offset by a radius $r$. Let $d(\bm{\theta}) = \mathcal{N}(\mu = \bm{\theta}[1], \sigma = \exp(\bm{\theta}[2]))$ be the Gaussian distribution obtained from solution $\bm{\theta}$. The K-L divergence based deflation function $m$ used was therefore:
\begin{align}
    m(\bm{\theta}; \bm{\theta}_k) = \max \Bigg( \text{div} \Big( d(\bm{\theta}), d(\bm{\theta}_k) \Big) - 1, 0 \Bigg)^{-3}
\end{align}
where $\text{div}$ is the K-L divergence between 2 Gaussian distributions. Note that the minimum radius of 1 was used to enforce convergence to a different distribution even if the convergence assumptions of deflation were violated. The optimal Gaussian distributions' obtained and the target distribution's probability density functions are shown in figure \ref{fig:vi}. The results indicate that deflation was successful at generating reasonable mode-seeking approximations of the target mixture of Gaussians.

\begin{figure}[htb]
     \centering
     \includegraphics[width=0.9\columnwidth]{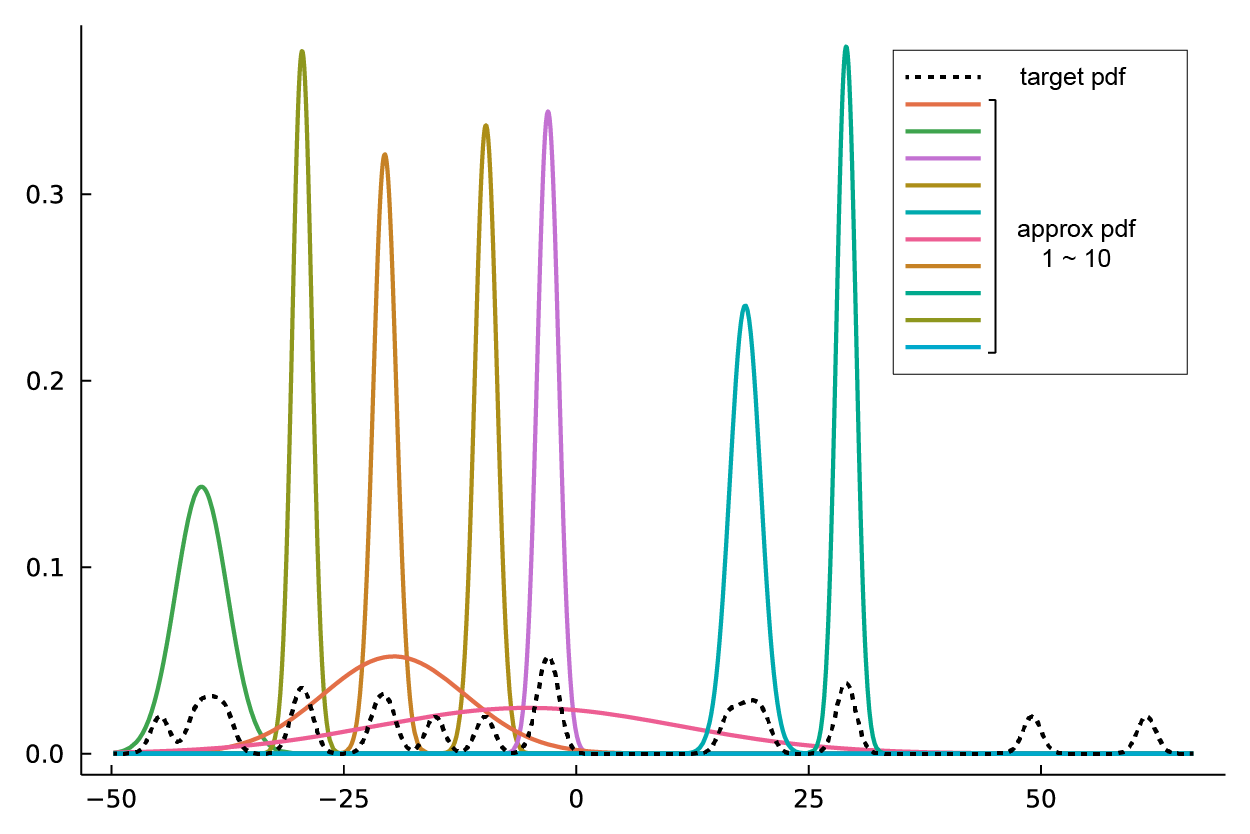}
     \caption{The figure shows the target probability density function (pdf) of the mixture of Gaussians and the pdf curves of the multiple Gaussian approximations obtained by solving the deflated variational inference problem 10 times from the same initial Gaussian solution $\mathcal{N}(\mu = 0, \sigma = \exp(5.0))$.}
     \label{fig:vi}
\end{figure}

\subsection{Pathfinder algorithm for variational inference}\label{sec:examples_pathfinder}

The pathfinder algorithm \citep{pathfinder2021} is a recently proposed algorithm for approximate Bayesian inference. The pathfinder algorithm relies on multi-start optimization of the log joint probability, reusing the trajectory of intermediate solutions and gradients from the optimization to construct a Gaussian approximation per trajectory. 
Re-starting the optimization from a different initial point can result in a different optimal solution and trajectory, leading to a potentially different local Gaussian approximation. 
These Gaussian approximations are then combined and weighted in a mixture of Gaussians which is used to approximate the posterior distribution. For more details, please refer to \citet{pathfinder2021}. 
For simplicity, a target mixture of Gaussians distribution was used in the experiments instead of an unknown posterior distribution. The Pathfinder.jl \footnote{\url{https://github.com/sethaxen/Pathfinder.jl}} package was adapted to use IPOPT \citep{wachter2006implementation} as an optimizer as wrapped in the Nonconvex.jl \footnote{\url{https://github.com/JuliaNonconvex/Nonconvex.jl}} package. 
Instead of random restarts, the same initial solution of 0.0 was used in all the deflation-based pathfinder sub-problems. The target (blue) and approximate (orange) mixtures of Gaussians are shown in figure \ref{fig:pathfinder}. 
The results are almost perfect for a target mixture of 2, 4 and 6 Gaussians. Given that all the optimization sub-problems were started from the same initial solution, this is a fairly positive result. 
For the target mixture of 8 Gaussians, deflation on its own (without random restarts) fails to accurately approximate the target distribution, however this is not entirely surprising.

\begin{figure}[htb]
    \centering
    \includegraphics[width=0.9\columnwidth]{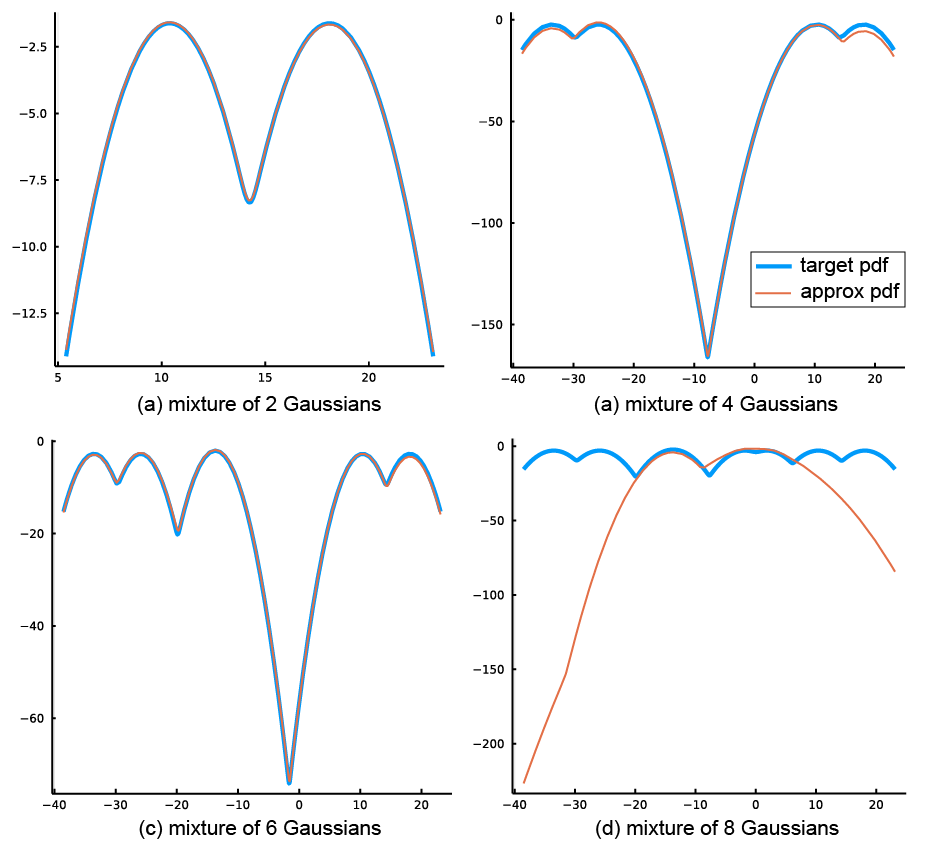}
    \caption{Target (blue) and approximate (red) mixtures of Gaussian probability density functions using the same initial solution of 0.0 in all of the deflation-based sub-problems in the pathfinder algorithm.}
    \label{fig:pathfinder}
\end{figure}

\subsection{Topology optimization - volume constrained compliance minimization}\label{sec:examples_topopt}

The volume-constrained compliance minimization problem from topology optimization (TO) seeks to find designs for physical structures that are as stiff as possible (i.e. least compliant) with respect to known boundary conditions and loading forces while adhering to a given material demand.
It has been well studied and widely applied in mechanical, aerospace, and architectural engineering~\citep{bendsoe2003topology}.
However, because TO problems are high-dimensional, partial differential equation (PDE) constrained, and non-convex, finding multiple local minima that are visually different has been challenging and rarely studied, despite its practical values. 
A compliance-minimizing, volume-constrained TO problem can be formulated as:
\begin{mini}|l|[3]
  {\bm{x}\in\RR^n}{\bm{f} \cdot \bm{u}}{}{}
  \addConstraint {\bm{K}(\bm{x}) \bm{u}(\bm{x})}{= \bm{f}}{\label{eq:topopt}}
  \addConstraint {\frac{\sum_{i=1}^{n} x_i}{n} \leq \tilde{V}}{}{}
  \addConstraint \bm{0} \leq {\bm{x} \leq \bm{1}}{}{}
\end{mini}
where $\bm{x}$ is the pseudo-density of cells in the domain, $\bm{f}$ the given load vector, $\bm{K}(\bm{x})$ the stiffness matrix from the finite-element discretization, $\tilde{V}$ is the user-specified volume fraction.
When $\bx_i = 0$, the corresponding cell is removed and when $\bx_i = 1$ the cell is kept.

We apply our deflation technique by adding an extra constraint to problem \ref{eq:topopt}, using the distance measure that encourages visually different designs mentioned in \cref{sec:distance_measures}:
\begin{align}
   \sum_{k=1}^{\tilde{K}} m(\bx; \bx_k) = \sum_{k=1}^{\tilde{K}} \max(\|\bx - \bx_k\|-r, 0)^{-p} \leq y
\end{align}
where $\tilde{K}$ is the number of found local minima from previous deflation iterations.
In our experiments, $p = 4,\, r = 20,\, y \in (0, 100)$.

Two classic topology optimization benchmark problems are tested here: a continuum MBB beam domain of dimension $120 \times 40$ and a cantilevering truss domain of dimension $40 \times 10$ (\cref{fig:topopt_spec}).
TopOpt.jl\footnote{\url{https://github.com/JuliaTopOpt/TopOpt.jl}} is used for the implementation of problem modeling, finite-element analysis, and density filters. 
We use the widely used Method of Moving Asymptotes (MMA) algorithm \citep{Svanberg1987}
to solve both the undeflated and deflated problems.
Runtime results are presented in \cref{tab:topopt_runtime_results}. 
\cref{fig:cont_topopt} and \cref{fig:truss_topopt} visualize the results of running 20 deflation iterations on both domains. 
The results show that the proposed deflation formulation generates visually different, near-optimal designs deterministically, all starting from the same initial guess.
The runtime results suggest no significant change in the running time due to adding a deflation constraint to an already constrained optimization problem.
\begin{table}[thb]
  \caption{Deflated topology optimization runtime. $^*$: deflated problem runtime is averaged over all the 20 deflation iteration.}
  \begin{center}
    \begin{small}
        \begin{tabular}{lccc}
          \toprule
          TopOpt problems &$\bm{x}$ dim & undeflated & deflated$^*$ \\
          \midrule
          Continuum (\cref{fig:cont_topopt}) & 4800 & 121s & 122s \\
          Truss (\cref{fig:truss_topopt}) & 3608  & 1.9s & 1.6s \\
          \bottomrule
        \end{tabular}
    \end{small}
  \end{center}
  \label{tab:topopt_runtime_results}
\end{table}
\begin{figure}[htb]
     \centering
     \includegraphics[width=0.9\columnwidth]{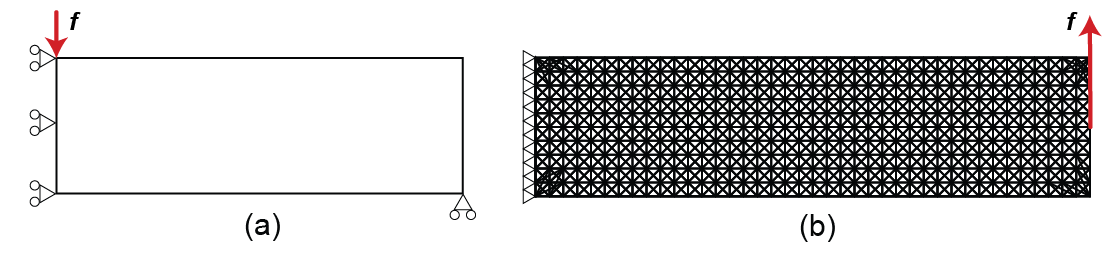}
     \caption{Topology optimization domains: (a) half MBB beam continuum domain; (b) cantilevering truss domain.}
     \label{fig:topopt_spec}
\end{figure}
\begin{figure}[htb]
     \centering
     \includegraphics[width=0.9\columnwidth]{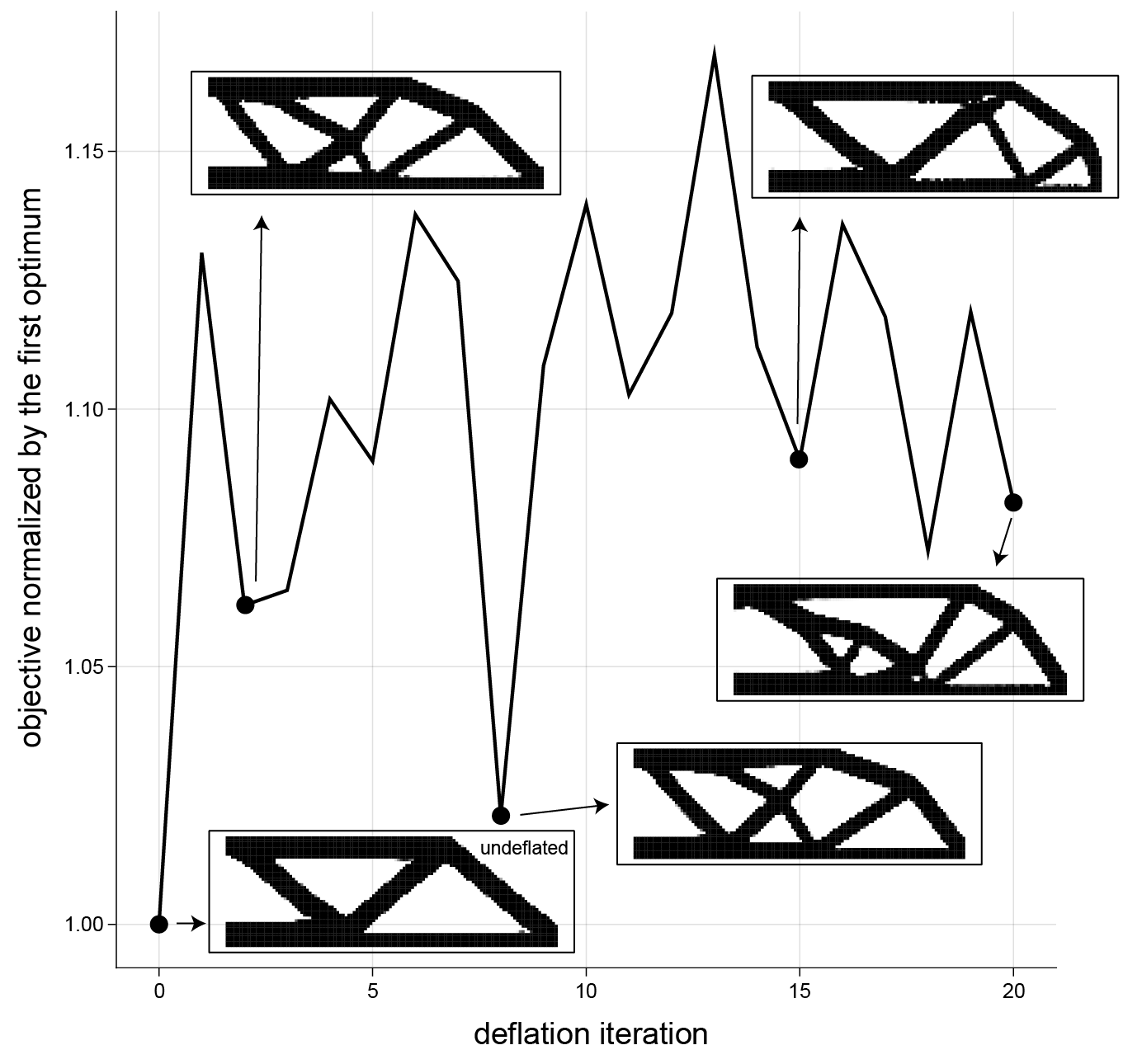}
     \caption{Deflated results of the continuum half MBB beam problem. Iteration 0 is the solution found by solving the undeflated problem.}
     \label{fig:cont_topopt}
\end{figure}
\begin{figure}[]
     \centering
     \includegraphics[width=0.9\columnwidth]{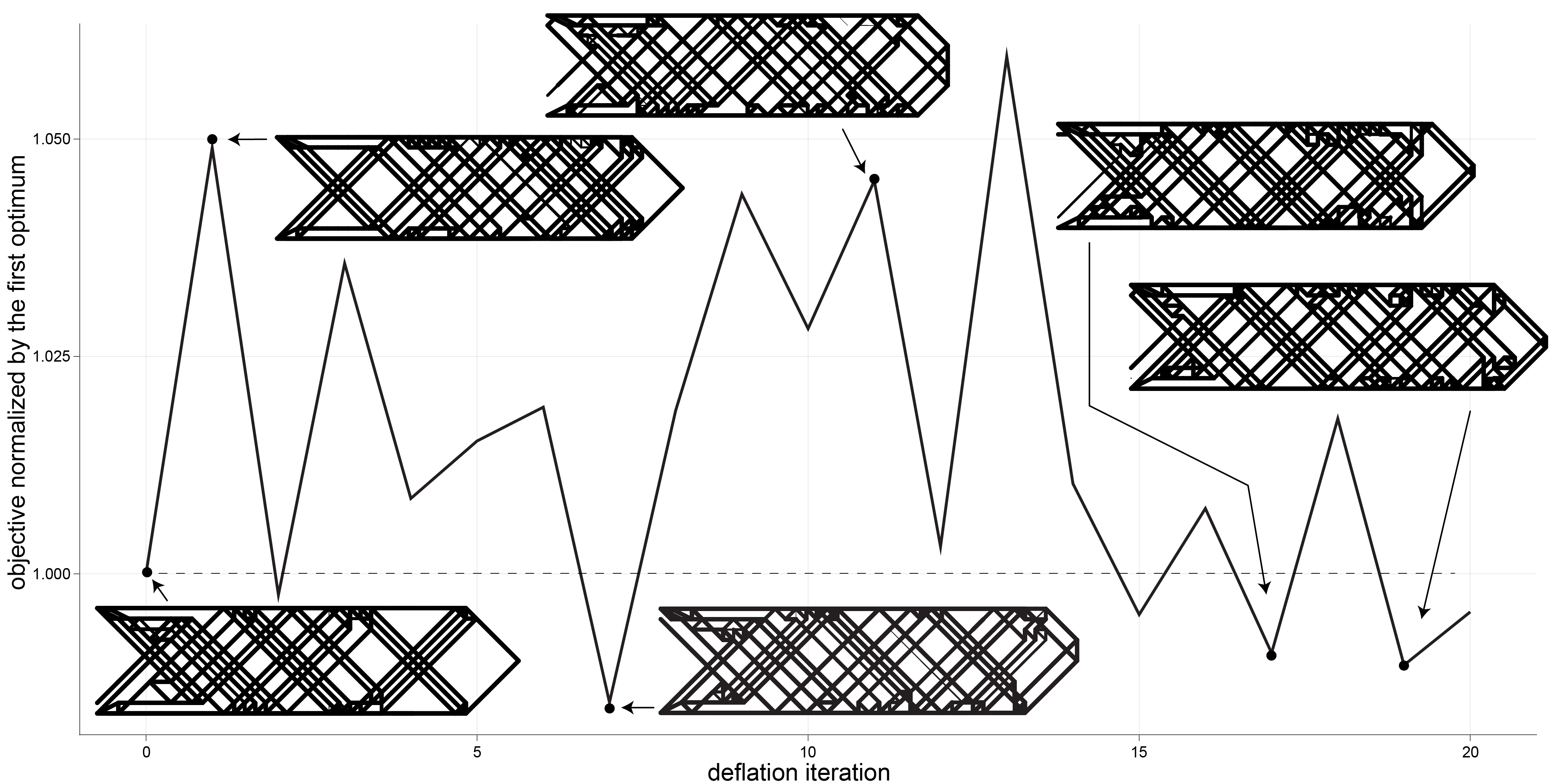}
     \caption{Deflated results of the discrete, cantilevering truss problem. Iteration 0 is the solution found by solving the undeflated problem.}
     \label{fig:truss_topopt}
\end{figure}
\section{Conclusion and future work} \label{sec:conclusion}

In this work, a new way to use deflation was proposed to find diverse solutions of non-convex optimization problems.
With the proposed problem re-formulation, the deflation technique can be easily applied to any non-convex problems using existing, off-the-shelf optimizers.
Promising results of applying the proposed technique on solving problems from topology optimization and variational inference are presented.
We hope our work can enable applications of this simple, yet powerful idea of deflation to a broader set of problems where multiple local optimal solutions are required.
In the future, we hope to make use of deflation-based optimization to enhance optimization-based machine learning algorithms such as maximum likelihood estimation, and to use it for model-based design of experiments in clinical trials to explore multiple possible designs.

\newpage

\bibliographystyle{abbrvnat}
\bibliography{refs}

\newpage

\appendix
\section{Deflated KKT system of NLP's barrier subproblems}\label{sec:appendix_kkt_logbarrier}
In this section, derivations for the deflated KKT system of the barrier sub-problem used by the interior point method \citep{primaldual} as implemented in the IPOPT software \citep{wachter2006implementation} are presented.

\subsection{KKT system of the barrier problem}

In IPOPT, a log barrier method is used to guarantee that $\bm{l} \leq \bm{x} \leq \bm{u}$ remains satisfied at every intermediate solution if the initial solution is within the bounds. 
The barrier function is defined as:
\begin{align*}
  \mathcal{B}_{\mu}(\bm{x}, \bm{l}, \bm{u}) = -\mu \Big( \sum_i \log{(x_i - l_i)} + \sum_i \log{(u_i - x_i)} \Big)
\end{align*}
for some $\mu > 0$ which would go to $\infty$ if any of the decision variables approaches one of its finite bounds. 
This creates a barrier stopping the optimizer from every reaching the finite bound.
The barrier sub-problem is defined as:
\begin{mini*} 
  {\bm{x}}{\phi_{\mu}(\bm{x}) = f(\bm{x})  + \mathcal{B}_{\mu}(\bm{x}, \bm{l}, \bm{u})}{}{}
\addConstraint {\bm{c}(\bm{x})}{= \bm{0}}{}
\end{mini*}
The KKT stationarity condition is therefore:
\begin{align*}
  \nabla f(\bm{x}) + \nabla \bm{c}(\bm{x})^T \bm{\lambda} - \bm{z}_{\bm{l}} - \bm{z}_{\bm{u}} = \bm{0}
\end{align*}
where $\bm{\lambda}$ is the vector Lagrangian multipliers associated with the equality constraint $\bm{c}(\bm{x}) = \bm{0}$, $\bm{z}_{\bm{l}}$ is a vector whose $i^{th}$ element is
  $z_{l_i} = \frac{\mu}{x_i - l_i}$
and $\bm{z}_{\bm{u}}$ is a vector whose $i^{th}$ element is
  $z_{u_i} = \frac{\mu}{u_i - x_i}$.

Additionally, let $\bm{Z}_{\bm{l}}$ be the diagonal matrix whose diagonal is $\bm{z}_{\bm{l}}$, $\bm{Z}_{\bm{u}}$ be the diagonal matrix whose diagonal is $\bm{z}_{\bm{u}}$, $\bm{X}_{\bm{l}}$ be the diagonal matrix whose diagonal is:
 $\tilde{x}_{l_i} = x_i - l_i$
and $\bm{X}_{\bm{u}}$ be the diagonal matrix whose diagonal is:
 $\tilde{x}_{u_i} = u_i - x_i$.

The first-order KKT sufficient conditions for optimality of the barrier problem, assuming the constraint qualifications are satisfied, can be written as:
\begin{subequations}
\begin{align}
  \nabla f(\bm{x}) + \nabla \bm{c}(\bm{x})^T \bm{\lambda} - \bm{z}_{\bm{l}} - \bm{z}_{\bm{u}} = \bm{0} \\
  \bm{c}(\bm{x}) = \bm{0} \\
  \bm{X}_{\bm{l}} \bm{Z}_{\bm{l}} \bm{1} - \mu \bm{1} = \bm{0} \label{eqn:reciprocal1} \\
  \bm{X}_{\bm{u}} \bm{Z}_{\bm{u}} \bm{1} - \mu \bm{1} = \bm{0} \label{eqn:reciprocal2} \\
  \bm{l} \leq \bm{x} \leq \bm{u} \\
  \bm{z}_{\bm{l}} \geq \bm{0} \\
  \bm{z}_{\bm{u}} \geq \bm{0}
\end{align}
\end{subequations}
where $\bm{1}$ is a vector of ones. 
Conditions \ref{eqn:reciprocal1} and \ref{eqn:reciprocal2} ensure that the relationship between $\bm{X}_{\bm{l}}$, $\bm{Z}_{\bm{l}}$, $\bm{X}_{\bm{u}}$ and $\bm{Z}_{\bm{u}}$ is maintained according to the definitions of $\bm{z}_{\bm{l}}$ and $\bm{z}_{\bm{u}}$. 
In an interior point algorithm (e.g. IPOPT~\cite{wachter2006implementation}), primal-dual solutions to the equality KKT conditions are found using a Newton-like method while ensuing that the inequality conditions are satisfied by projection.

In order to solve the barrier problem for a given value $\mu = \mu_j$, a damped Newton's method is usually applied to the primal-dual optimality conditions. 
Here we use $k$ to denote the iteration counter for the inner iterations when solving the barrier problem. Given an iterate $(\bm{x}_k, \bm{\lambda}_k, \bm{z}_{\bm{l}, k}, \bm{z}_{\bm{u}, k})$ with $\bm{l} < \bm{x}_k < \bm{u}$ and  $\bm{z}_{\bm{l}, k}, \bm{z}_{\bm{u}, k} > \bm{0}$, some search directions $(\bm{d}_k^{\bm{x}}, \bm{d}_k^{\bm{\lambda}}, \bm{d}_k^{\bm{z}_{\bm{l}}}, \bm{d}_k^{\bm{z}_{\bm{u}}})$ are obtained using the regularized linearization of the optimality conditions (excluding the inequality conditions):
\begin{align}
  \begin{bmatrix}
    \bm{W}_k & \bm{A}_k & -\bm{I} & -\bm{I} \\
    \bm{A}_k^T & \bm{0} & \bm{0} & \bm{0} \\
    \bm{Z}_{\bm{l},k} & \bm{0} & \bm{X}_{\bm{l},k} & \bm{0} \\
    \bm{Z}_{\bm{u},k} & \bm{0} & \bm{0} & \bm{X}_{\bm{u}, k}
  \end{bmatrix} \begin{pmatrix}
    \bm{d}_k^{\bm{x}} \\
    \bm{d}_k^{\bm{\lambda}} \\
    \bm{d}_k^{\bm{z}_{\bm{l}}} \\
    \bm{d}_k^{\bm{z}_{\bm{u}}}
  \end{pmatrix} = 
    - \begin{pmatrix}
    \nabla f(\bm{x}_k) + \bm{A}_k^T \bm{\lambda}_k - \bm{z}_{\bm{l}, k} - \bm{z}_{\bm{u}, k}  \\
    \bm{c}(\bm{x}_k) \\
    \bm{X}_{\bm{l}, k} \bm{Z}_{\bm{l}, k} \bm{1} - \mu_j \bm{1} \\
    \bm{X}_{\bm{u}, k} \bm{Z}_{\bm{u}, k} \bm{1} - \mu_j \bm{1}
  \end{pmatrix}
\end{align}

where $\bm{A}_k := \nabla \bm{c}(\bm{x}_k)^T$ and $\bm{W}_k := \nabla^2_{\bm{x}\bm{x}} (f(\bx_k) + \bm{c}(\bm{x}_k)^T \bm{\lambda}_k)$ is the Hessian of the Lagrangian function of the original problem.
The Lagrangian terms from the bounds constraints are ignored because they don't contribute to the Hessian. When the Hessian of the Lagrangian is not available, a l-BFGS approximation \citep{nocedal2006numerical} of the Hessian can be used instead. This changes the IPOPT algorithm from a second order algorithm to a first order one. And the Newton update becomes a quasi-Newton update.

Instead of solving the non-symmetric system of equations above, one can instead change the system as such:
\begin{align}
  \begin{bmatrix}
    \bm{W}_k & \bm{A}_k & -\bm{I} & -\bm{I} \\
    \bm{A}_k^T & \bm{0} & \bm{0} & \bm{0} \\
    \bm{X}_{\bm{l}}^{-1} \bm{Z}_{\bm{l},k} & \bm{0} & \bm{I} & \bm{0} \\
    \bm{X}_{\bm{u}}^{-1} \bm{Z}_{\bm{u},k} & \bm{0} & \bm{0} & \bm{I}
  \end{bmatrix} \begin{pmatrix}
    \bm{d}_k^{\bm{x}} \\
    \bm{d}_k^{\bm{\lambda}} \\
    \bm{d}_k^{\bm{z}_{\bm{l}}} \\
    \bm{d}_k^{\bm{z}_{\bm{u}}}
  \end{pmatrix} = 
  - \begin{pmatrix}
    \nabla f(\bm{x}_k) + \bm{A}_k^T \bm{\lambda}_k - \bm{z}_{\bm{l}, k} - \bm{z}_{\bm{u}, k}  \\
    \bm{c}(\bm{x}_k) \\
    \bm{z}_{\bm{l}, k} - \mu_j \bm{X}_{\bm{l}}^{-1} \bm{1} \\
    \bm{z}_{\bm{u}, k} - \mu_j \bm{X}_{\bm{u}}^{-1} \bm{1}
  \end{pmatrix}
\end{align}
Adding the third and fourth equations to the first one, the third and fourth blocks of coefficients of the first equation will be eliminated.
\begin{align}
  \begin{bmatrix}
    \bm{W}_k + \bm{\Sigma}_k & \bm{A}_k & \bm{0} & \bm{0} \\
    \bm{A}_k^T & \bm{0} & \bm{0} & \bm{0} \\
    \bm{X}_{\bm{l}}^{-1} \bm{Z}_{\bm{l},k} & \bm{0} & \bm{I} & \bm{0} \\
    \bm{X}_{\bm{u}}^{-1} \bm{Z}_{\bm{u},k} & \bm{0} & \bm{0} & \bm{I}
  \end{bmatrix} \begin{pmatrix}
    \bm{d}_k^{\bm{x}} \\
    \bm{d}_k^{\bm{\lambda}} \\
    \bm{d}_k^{\bm{z}_{\bm{l}}} \\
    \bm{d}_k^{\bm{z}_{\bm{u}}}
  \end{pmatrix} = 
    -\begin{pmatrix}
    \nabla f(\bm{x}_k) + \bm{A}_k^T \bm{\lambda}_k - \mu_j \bm{X}_{\bm{l}}^{-1} \bm{1} - \mu_j \bm{X}_{\bm{u}}^{-1} \bm{1}  \\
    \bm{c}(\bm{x}_k) \\
    \bm{z}_{\bm{l}, k} - \mu_j \bm{X}_{\bm{l}}^{-1} \bm{1} \\
    \bm{z}_{\bm{u}, k} - \mu_j \bm{X}_{\bm{u}}^{-1} \bm{1}
  \end{pmatrix}
\end{align}
where $\bm{\Sigma}_k = \bm{X}_{\bm{l}}^{-1} \bm{Z}_{\bm{l},k} + \bm{X}_{\bm{u}}^{-1} \bm{Z}_{\bm{u},k}$. Therefore, one can now solve for $\bm{d}_k^{\bm{x}}$ and $\bm{d}_k^{\bm{\lambda}}$ by solving the following symmetric linear system:
\begin{align}
  \begin{bmatrix}
    \bm{W}_k + \bm{\Sigma}_k & \bm{A}_k \\
    \bm{A}_k^T & \bm{0}
  \end{bmatrix} \begin{pmatrix}
    \bm{d}_k^{\bm{x}} \\
    \bm{d}_k^{\bm{\lambda}}
  \end{pmatrix} = 
  - \begin{pmatrix}
    \nabla f(\bm{x}_k) + \bm{A}_k^T \bm{\lambda}_k - \mu_j \bm{X}_{\bm{l}}^{-1} \bm{1} - \mu_j \bm{X}_{\bm{u}}^{-1} \bm{1}  \\
    \bm{c}(\bm{x}_k)
  \end{pmatrix}
  \label{eq:ipopt_linearized_kkt}
\end{align}
then use the value of $\bm{d}_k^{\bm{x}}$ to find $\bm{d}_k^{\bm{z}_{\bm{l}}}$ and $\bm{d}_k^{\bm{z}_{\bm{u}}}$ using:
\begin{subequations}
\begin{align}
  \bm{d}_k^{\bm{z}_{\bm{l}}} = - \bm{z}_{\bm{l}, k} + \mu_j \bm{X}_{\bm{l}}^{-1} \bm{1} - \bm{X}_{\bm{l}}^{-1} \bm{Z}_{\bm{l}, k} \bm{d}_k^{\bm{x}} \\
  \bm{d}_k^{\bm{z}_{\bm{u}}} = - \bm{z}_{\bm{u}, k} + \mu_j \bm{X}_{\bm{u}}^{-1} \bm{1} - \bm{X}_{\bm{u}}^{-1} \bm{Z}_{\bm{u}, k} \bm{d}_k^{\bm{x}}
\end{align}
\end{subequations}

\subsection{Deflating the KKT system} \label{sec:appendix_deflated_ipopt}


If we define the RHS of \cref{eq:ipopt_linearized_kkt} as:
\begin{align}
    \bm{F}(\bm{x}, \bm{\lambda}) = \begin{pmatrix}
    \nabla f(\bm{x}) + \bm{A}^T \bm{\lambda} - \mu_j \bm{X}_{\bm{l}}^{-1} \bm{1} - \mu_j \bm{X}_{\bm{u}}^{-1} \bm{1}  \\
    \bm{c}(\bm{x})
  \end{pmatrix}
\end{align}
Then the goal of the primal-dual optimizer then becomes solving for $\bm{F}(\bm{x}, \bm{\lambda}) = \bm{0}$. 

If we apply the deflation operator to the entire $\bm{F}(\bm{x}, \bm{\lambda})$, we will obtain $G(\bm{x}, \bm{\lambda}) = \bm{M}(\bm{x}; \bm{x}_1) \bm{F}(\bm{x}, \bm{\lambda}) = 0$, whose Jacobian is:
%
\begin{align*}
    \nabla_{\bm{x}} \bm{G}(\bm{x}) = m(\bm{x}; \bm{x}_1) \nabla_{\bm{x}} \bm{F}(\bm{x}, \bm{\lambda}) + 
    diag(F(\bm{x}, \bm{\lambda})) \bm{1} (\nabla_{\bm{x}} m(\bm{x}; \bm{x}_1))^T
\end{align*}
where $\nabla_{\bm{x}} m(\bm{x}; \bm{x}_1)$ is the gradient vector of the deflation function, $\bm{1}$ is a column vector of 1s and $diag(\bm{F}(\bm{x}))$ is a diagonal matrix with diagonal $\bm{F}(\bm{x})$.
The Jacobian of this function wrt $\bm{x}$ is:
\begin{align}
  \begin{bmatrix}
    \bm{W} + \bm{\Sigma} \\
    \bm{A}^T
  \end{bmatrix}
\end{align}
where $\bm{W}$, $\bm{\Sigma}$ and $\bm{A}$ are defined as $\bm{W}_k$, $\bm{\Sigma}_k$ and $\bm{A}_k$ in the last section.
The top matrix is symmetric because it is the Hessian matrix of the barrier objective function. 
However, $\bm{A}_k := \nabla \bm{c}(\bm{x}_k)^T$ in general is not symmetric, and thus prevents the usage of efficient linear algebra algorithms for symmetric linear system.

\end{document}